\colorlet{TeinteP1}{black}
\colorlet{TeinteP2}{black!80}
\colorlet{TeinteP3}{black!60}
\newlength {\@Maths@Op@LargLR}% Largeur d'un \left. ou d'un \right.
\newcommand{\@Maths@OpenCloseExt}[3]{%
    \mathopen{%
        \left#1\vphantom{#2}\right.%
        \hspace{-\@Maths@Op@LargLR}%
    }%
    #2%
    \mathclose{%
        \hspace{-\@Maths@Op@LargLR}%
        \left.\vphantom{#2}\right#3%
    }%
}
\newcommand{\@Maths@OpenRelCloseExt}[5]{%
    \mathopen{%
        \left#1\vphantom{#2#4}\right.%
        \hspace{-\@Maths@Op@LargLR}%
    }%
    #2%
    \mathrel{%
        \hspace{-\@Maths@Op@LargLR}%
        \left.\vphantom{#2#4}\middle#3\right.%
        \hspace{-\@Maths@Op@LargLR}%
    }%
    #4%
    \mathclose{%
        \hspace{-\@Maths@Op@LargLR}%
        \left.\vphantom{#2#4}\right#5%
    }%
}
\newcommand{\enstq}[2]{%
    \@Maths@OpenRelCloseExt{\lbrace}{#1}{\mid}{#2}{\rbrace}%
}
\newcommand{\card}[1]{%
    \@Maths@OpenCloseExt{\lvert}{#1}{\rvert}%
}
\newcommand{\F}{\ensuremath{\mathalpha{\mathbb{F}}}}
\DeclareMathOperator{\parties}{\mathscr{P}}% L'ensemble des parties d'un ensemble
\DeclareMathOperator{\rank}{rank}% The rank of a matrix
\DeclareMathOperator{\spn}{\mathit{span}}%
\begin{document}
\title{A Corollary of Hamada and Ohmori's on Group Law over BIBD}%
\author{Arnaud Bannier\inst{1} \and Johann Barbier\inst{2} \and Eric Filiol\inst{1} \and Pierre Castel\inst{2}}%
\authorrunning{A. Bannier, J. Barbier, E. Filiol and P. Castel}%
\institute{ESIEA - CVO Lab. F-53 000 Laval FRANCE \and
    ARX Arceo - Security Lab. - F-35 580 Guichen FRANCE \\
    \email{arnaud.bannier@esiea-ouest.fr}}%
\maketitle
\begin{abstract}
In this note, we present an interesting corollary of a theorem of Hamada and Ohmori.
We prove that the complementary of PG$(n,2)$ is the only design, up to an isomorphism, whose blocks form a group for the symmetric difference.

\end{abstract}%
\keywords{symmetric BIBD, symmetric difference, rank of incidence matrix}%

%-----------------------------------------------------------------------------------------------------------------------
\section*{Introduction}

The construction and the existence of BIBDs of given $(v, k, \lambda)$ parameters is an open problem \cite[pp 36--57]{colbourn10handbook}.
Different works exhibit necessary conditions on these parameters (for example, the well--known Fisher's inequality \cite{fisher40examination}), but without any additional hypothesis, finding sufficient conditions is difficult.
A more natural approach consists in adapting algebraic structures to find infinite families of BIBDs.
In this way, properties of combinatorial designs can be equivalent to algebraic ones. For instance, Hadamard matrices are known to be equivalent to $(4n - 1, 2n - 1, n - 1)$--BIBDs \cite{beth99design} and so provide families of BIBDs \cite{seberry92hadamard}.
One of the most widespread strategies relies on group structures \cite{beth99design}.

A first approach consists in fixing an automorphism group to deduce associated designs \cite{kramer76tdesigns}.
Another one relies on finding a subset of a finite group with fixed properties called a \emph{difference set}. The elements of the group are the points of the design, the subset and its translates are the blocks of the design \cite{jungnickel92difference}\cite[chapter 6]{beth99design}. It is also natural to consider design blocks as elements of a finite group.

Some group laws on blocks of a design have already been highlighted in the literature.
In his paper \cite{kantor75symplectic}, Kantor gave one of the most relevant examples.
A symmetric BIBD satisfies the \emph{symmetric difference property} (SDP) if for any three blocks $B$, $C$, $D$ then $B \Delta C \Delta D$ is either a block or the complement of a block (where $\Delta$ denote the symmetric difference). Kantor investigated such designs and proved \cite[Theorem 3]{kantor75symplectic} that a group law can be defined on their blocks as follows. Choose a block $B$ (the neutral element).
For all blocks $X$ and $Y$, define $X + Y$ as $B \Delta X \Delta Y$ or its complement depending on which of the two results is a block.
With this addition, the blocks form an elementary abelian 2--group. Kantor also gave the family $\mathscr{S}^\epsilon(2m)$ of SDP--designs and proved that any other family of SDP--designs must have the same parameters.

Another example is provided by Kimberley \cite{kimberley71construction}. Let $(X,\mathcal{B})$ be a Hadamard 3--design. Then, the complement of any block is another block. A block $B$ is said to be \emph{good} if for all block $C$ distinct from $B$ and its complement $\overline{B}$, the symmetric difference $B \Delta C$ is again a block.
Remark that the original definition of Kimberley is different but equivalent to this one for the Hadamard 3--designs.
If $B$ is a good block, then $\overline{B}$ is also a good block and $\{B,\overline{B}\}$ is a \emph{good block class}.
Let $\mathscr{G}$ be the set of all good block classes and let $\mathscr{H}$ be the set $\mathscr{G} \cup \{\{X,\emptyset\}\}$.
Define the binary operation $\circ$ on $\mathscr{H}$ by $\{B,\overline{B}\} \circ \{C,\overline{C}\} = \{B \Delta C, \overline{B \Delta C}\}$. Then, $\mathscr{H}$ is an elementary abelian 2--group under the operation $\circ$, see \cite[Lemma 4.8]{kimberley71construction}.

Here, we propose an original approach similar with \cite{kantor75symplectic,kimberley71construction} which provides BIBDs whose blocks form a group for the symmetric difference. We show necessary conditions on parameters to provide BIBDs with such a structure and prove that they are sufficient with a result of Hamada and Ohmori. We emphasize that all such designs have been found.

Initially, the additional group structure allowed us to construct these designs and prove their uniqueness up to an isomorphism. This construction was in fact equivalent to that of Sylvester \cite{sylvester1867thoughts}. Then, we found that our main result can be seen as a corollary of a theorem of Hamada and Ohmori \cite[Theorem 4.2]{hamada75bibd}. It is in this perspective that we present this paper.

The paper is organized as follows: first, we recall basic definitions and notations. Then, we study the necessary conditions to provide the BIBDs with a group structure. We conclude using the result Hamada and Ohmori.

%-----------------------------------------------------------------------------------------------------------------------
\section{Basic Notations and Definitions}\label{SecBasic}%

\begin{definition}
Let $X$ be a finite set with $v$ elements called \emph{points} and $\mathcal{B}$ be a set of subsets of $X$ called \emph{blocks}. The pair $(X,\mathcal{B})$ is said to be a \emph{simple $(v,k,\lambda)$ balanced incomplete block design} (or a simple $(v,k,\lambda)$--BIBD for short) if all the blocks contain exactly $k$ points and if every pair of distinct points is contained in exactly $\lambda$ blocks. It is also required that $v > k \geq 2$. Such a design is said \emph{symmetric}, and denoted SBIBD, if the number of points is equal to the number of blocks, or equivalently, if the cardinal of the intersection of two blocks is constant \cite[Corollary II.3.3]{beth99design}.
\end{definition}

It is well known that the equality $\lambda(v - 1) = k(k - 1)$ holds in any $(v,k,\lambda)$--SBIBD \cite[Definition II.3.1]{beth99design}.
Let $(X,\mathcal{B})$ be a simple $(v,k,\lambda)$--BIBD. Write $X = \{x_1,\ldots,x_v\}$ and $\mathcal{B} = \{B_1,\ldots,B_b\}$. The \emph{incidence matrix} of $(X,\mathcal{B})$ (for this order) is the matrix $M = (m_{i,j})^{1 \leq i \leq v}_{1 \leq j \leq b}$ defined by
\[
    m_{i,j} = \begin{cases}
        1 & \text{if } x_i \in B_j \enspace,\\
        0 & \text{if } x_i \nin B_j \enspace.
    \end{cases}
\]

%-----------------------------------------------------------------------------------------------------------------------
\section{Necessary and Sufficient Conditions}

Let $(X,\mathcal{B})$ be a simple $(v,k,\lambda)$--BIBD.
The goal of this paper is to endow $\mathcal{B}$ with a group law.
As $\mathcal{B}$ is included in the power set $\parties(X)$ of $X$, choosing a binary operation $\ast$ such that $(\mathcal{B},\ast)$ is a subgroup of $(\parties(X),\ast)$ is natural.
Among all classical binary operations\footnote{By classical, we mean $\cup$, $\cap$, $\Delta$ and $\setminus$.} on $\parties(X)$, the symmetric difference $\Delta$ is the only one which gives rise to a group.
The empty set $\emptyset$, the neutral element of $(\parties(X),\Delta)$, is never in $\mathcal{B}$.
Thus, we define\footnote{We should define $\mathscr{B} = (\mathcal{B} \cup \{\emptyset\}, \tilde{\Delta})$ where $\tilde{\Delta}$ is the restriction of $\Delta$ on $\enstq{(B,C) \in (\mathcal{B} \cup \{\emptyset\})^2}{B \Delta C \in \mathcal{B} \cup \{\emptyset\}}$.} $\mathscr{B} = (\mathcal{B} \cup \{\emptyset\}, \Delta)$.
The purpose of this section is to study the conditions over the parameters $(v,k,\lambda)$ such that $\mathscr{B}$ is a group.

\begin{lemma}\label{LemParameters}%
Assuming that $\mathscr{B}$ is a group, the design $(X,\mathcal{B})$ is a symmetric $(4\lambda - 1, 2\lambda, \lambda)$-BIBD.
\end{lemma}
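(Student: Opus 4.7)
The plan is to exploit the idempotence of symmetric difference to pin down a constant intersection size, then use the symmetric BIBD numerical relation to recover all three parameters.

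First, I would record the group-theoretic consequences. Since $A\,\Delta\,A=\emptyset$ for every $A\in\parties(X)$, the group $\mathscr{B}$ is of exponent $2$, hence an elementary abelian $2$-group. Closure then says: for any two distinct blocks $B,C\in\mathcal{B}$, the set $B\,\Delta\,C$ lies in $\mathcal{B}\cup\{\emptyset\}$; since it cannot be $\emptyset$ (the blocks being distinct), it must be a block, of cardinality $k$.

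Next, I would translate closure into a metric condition on blocks. From the identity $\card{B\,\Delta\,C}=\card{B}+\card{C}-2\card{B\cap C}=2k-2\card{B\cap C}$, the previous step forces $\card{B\cap C}=k/2$ for every pair of distinct blocks. In particular $k$ is even, and the intersection number of any two distinct blocks is the constant $k/2$. By the criterion recalled in the definition (Corollary~II.3.3 of \cite{beth99design}), a simple BIBD with constant pairwise block intersection is symmetric, so $(X,\mathcal{B})$ is an SBIBD and this constant intersection equals $\lambda$. Therefore $\lambda=k/2$, i.e.\ $k=2\lambda$.

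Finally, I would feed $k=2\lambda$ into the standard relation $\lambda(v-1)=k(k-1)$ recalled just after the definition of an SBIBD. This gives $\lambda(v-1)=2\lambda(2\lambda-1)$, and dividing by $\lambda$ (which is nonzero since $k\geq2$) yields $v=4\lambda-1$. Hence the parameters are $(4\lambda-1,2\lambda,\lambda)$, as claimed.

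I do not expect any serious obstacle here: the whole argument is driven by the computation $\card{B\,\Delta\,C}=2k-2\card{B\cap C}$ combined with the cited characterisation of symmetry via constant block intersection. The only mild point to keep in mind is to verify that distinct blocks indeed give a nonempty symmetric difference, which follows from the injectivity of the group translation $C\mapsto B\,\Delta\,C$.
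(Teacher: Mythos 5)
Your proposal is correct and follows essentially the same route as the paper's proof: compute $\card{B\,\Delta\,C}=2k-2\card{B\cap C}$, deduce the constant intersection $k/2$, invoke the cited characterisation of symmetry to get $\lambda=k/2$, and conclude via $\lambda(v-1)=k(k-1)$. Your explicit remark that distinct blocks have nonempty symmetric difference is a small point the paper leaves implicit, but otherwise the arguments coincide.
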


\begin{proof}
Let $B$, $B^\prime$ be two distinct elements of $\mathcal{B}$. By definition, each bock of $\mathcal{B}$ contains exactly $k$ points. As $B \Delta B^\prime$ is an element of $\mathcal{B}$, $\card{B \Delta B^\prime} = k$. Since
\[
    \card{B \Delta B^\prime} = \card{B} + \card{B^\prime} - 2 \card{B \cap B^\prime} = 2k - 2\card{B \cap B^\prime} \enspace ,
\]
the equality $2\card{B \cap B^\prime} = k$ holds. Noting that the cardinal of the intersection of two blocks is constant, $(X,\mathcal{B})$ is a symmetric BIBD and $\card{B \cap B^\prime} = \lambda$. Thus, we have $2\lambda = k$. From the equality $\lambda(v-1) = k(k-1)$, it follows that
\[
    v = \frac{k(k-1)}{\lambda} + 1 = \frac{2\lambda(2\lambda-1)}{\lambda} + 1 = 4\lambda - 1 \enspace .
\]
Hence, $(X,\mathcal{B})$ is a $(4\lambda - 1, 2\lambda, \lambda)$-SBIBD.\qed
\end{proof}

Write $X = \{x_1,\ldots,x_v\}$, $\mathcal{B} = \{B_1,\ldots,B_b\}$ and let $M$ be the incidence matrice of $(X,\mathcal{B})$. Let $C_1,\ldots,C_b$ be its columns, seen as elements of $\F_2^b$ and define $C_0 = (0,\ldots,0) \in \F_2^b$.
The blocks $B_i$ and $B_j$ ($1 \leq i,j \leq b$, $i \neq j$) are represented respectively by $C_i$ and $C_j$. It is easily seen that $B_i \Delta B_j$ is represented by $C_i + C_j$ and $B_i \Delta B_i = \emptyset$ by $C_0$.
Define $\mathscr{C} = (\{C_0,C_1,\ldots,C_b\}, +)$.
Consequently, $\mathscr{B}$ is a group if, and only if $\mathscr{C}$ is a group.
If this is the case, $\mathcal{C}$ has an additional structure of $\F_2$-vector space.

\begin{lemma}\label{LemRank}%
Let $n$ denote the rank of $M$.
Then $\mathscr{B}$ is a group if, and only if $(X,\mathcal{B})$ is a $(2^n - 1, 2^{n-1}, 2^{n-2})$-SBIBD.
\end{lemma}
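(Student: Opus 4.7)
The plan is to translate the group property of $\mathscr{B}$ (equivalently, of $\mathscr{C}$) into a statement about the $\F_2$-span $V$ of the columns of $M$, which by definition has $|V| = 2^n$. Both directions reduce to a cardinality comparison between $\{C_0, C_1, \ldots, C_b\}$ and $V$.

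For the forward implication I would suppose $\mathscr{C}$ is a group. Since it is a finite subgroup of the elementary abelian $2$-group $(\F_2^v, +)$, it is automatically an $\F_2$-subspace. It contains every column $C_i$, hence contains $V$; being a subset of $V$ by construction, it equals $V$. Counting yields $b + 1 = |\mathscr{C}| = 2^n$. Symmetry of the design then gives $v = b = 2^n - 1$, and combining with Lemma~\ref{LemParameters}, which already forces parameters $(4\lambda - 1, 2\lambda, \lambda)$, the equation $4\lambda - 1 = 2^n - 1$ yields $\lambda = 2^{n-2}$ and $k = 2^{n-1}$.

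For the converse, I would assume $(X,\mathcal{B})$ is a $(2^n - 1, 2^{n-1}, 2^{n-2})$-SBIBD, so $b = v = 2^n - 1$ by symmetry. The $2^n$ vectors $C_0, C_1, \ldots, C_b$ are pairwise distinct (distinct blocks give distinct columns, and no block is empty since $k \geq 2$) and all lie in $V$, which itself has exactly $2^n$ elements. Equality of these two sets of equal cardinality is then forced, and the subspace structure of $V$ immediately furnishes closure of $\mathscr{C}$ under $+$, so $\mathscr{C}$, and hence $\mathscr{B}$, is a group.

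The whole argument hinges on the single observation that a finite additively closed subset of $\F_2^v$ is automatically an $\F_2$-subspace, which dispenses with any direct verification of associativity, inverses, and neutral element. I do not expect any real obstacle; the only point requiring slight care is checking that the $C_i$ are pairwise distinct and distinct from $C_0$ in the backward direction, but this is immediate from the simplicity of the design and from $k \geq 2$.
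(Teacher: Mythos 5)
Your proof is correct and follows essentially the same route as the paper: both directions compare $\mathscr{C}$ with its $\F_2$-span and conclude by counting, using that a subset of $\F_2^v$ closed under $+$ containing $0$ is a subspace. The only difference is cosmetic --- you spell out explicitly that the columns $C_0,\ldots,C_b$ are pairwise distinct (from simplicity and $k\geq 2$), a point the paper leaves implicit.
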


\begin{proof}
By definition, $n$ is the dimension of the subspace $\spn(\mathscr{C})$ of $\F_2^{2^n - 1}$.
Assume that $\mathscr{B}$ is a group.
It follows that $\mathscr{C}$ equals $\spn(\mathscr{C})$, so $\mathscr{C}$ has $2^n$ elements and the number $b$ of blocks equals $2^n - 1$.
From Lemma \ref{LemParameters}, we have $v = b = 4\lambda - 1$ and $k = 2\lambda$. Then $2^n - 1 = 4\lambda - 1$, that is $\lambda = 2^{n-2}$. Consequently, $(X,\mathcal{B})$ is a $(2^n - 1, 2^{n-1}, 2^{n-2})$-SBIBD.

Conversely, assume that $(X,\mathcal{B})$ is a $(2^n - 1, 2^{n-1}, 2^{n-2})$-SBIBD. Of course, $\mathscr{C} \subset \spn(\mathscr{C})$ and $\card{\mathscr{C}} = \card{\spn(\mathscr{C})} = 2^n$. It follows that $\mathscr{C}$ is a vector space and $\mathscr{B}$ is a group.
\qed
\end{proof}

Let us now consider the following theorem due to Hamada and Ohmori \cite[Theorem 4.2]{hamada75bibd}.

\begin{theorem}\label{ThmHamada}%
Let $D$ be a $(2^n - 1, 2^{n-1}, 2^{n-2})$-SBIBD and let $N$ be an incidence matrix of $D$. Then,
\[
    \rank_2(M) \geq n
\]
and the equality is attained when and only when the design $D$ is isomorphic with the complementary design of PG$(n - 1, 2)$.
\end{theorem}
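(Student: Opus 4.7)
My plan is to attack the inequality by a column-counting argument over $\F_2$ and to handle the equality case via the group-theoretic rigidity it forces on the design. First I would note that the $v$ columns of $N$ are nonzero (blocks have $k \geq 2$ points) and pairwise distinct (two equal blocks would intersect in $k \neq \lambda$ points, contradicting the constant-intersection property of an SBIBD). Adjoining the zero vector gives $1 + v = 2^n$ distinct elements inside the $\F_2$-column-span of $N$, so that span has dimension at least $n$, establishing $\rank_2(N) \geq n$.

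For the equality case I would first observe that when the rank is exactly $n$ the column span $V$ already has $2^n$ elements, so $V = \{0, C_1, \ldots, C_b\}$; in particular the symmetric difference of any two blocks is another block, and $\mathscr{B}$ is an elementary abelian $2$-group of order $2^n$. I would then fix an $\F_2$-linear isomorphism $\psi\colon \F_2^n \to V$ and, for each point $x_i$, define $\phi_i\colon \F_2^n \to \F_2$ as the composition of $\psi$ with the $i$-th coordinate projection on $V$. The pair $(\psi, \phi_\bullet)$ should realize $D$ as the complement of PG$(n-1,2)$: blocks are identified with nonzero vectors of $\F_2^n$, points with nonzero linear forms, and the incidence $x_i \in B_j$ rewrites as $\phi_i(\psi^{-1}(C_j)) = 1$.

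The heart of the argument is checking that the $\phi_i$ are nonzero and pairwise distinct. Nonzero-ness is immediate since every point lies in at least one block. Distinctness I expect to be the main obstacle: two points contained in exactly the same set of blocks would force the pair-count $\lambda$ to equal the replication number $r = k$, contradicting $k > \lambda$. Once distinctness is in hand, the counting coincidence that $\F_2^n$ admits exactly $2^n - 1 = v$ nonzero linear functionals promotes $x_i \mapsto \phi_i$ into a bijection, and the incidence relation lines up with that of the complement of PG$(n-1,2)$. The reason one ends up with the \emph{complement} rather than PG$(n-1,2)$ itself is that the $1$-entries of $N$ encode membership in a block, not non-membership.
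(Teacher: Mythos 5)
Your argument is essentially correct, but it is worth noting that the paper does not prove this statement at all: Theorem~\ref{ThmHamada} is quoted verbatim from Hamada and Ohmori, whose original proof goes through a general study of the $2$-rank of designs with these parameters. What you have done instead is give a short, self-contained proof tailored to this parameter family, and it works. The lower bound is fine: the $2^n-1$ columns are distinct and nonzero (distinctness follows from $\lambda=2^{n-2}<2^{n-1}=k$, as you say), so the column span contains at least $2^n$ vectors and hence has dimension at least $n$. For the equality case, rank $n$ forces the span to coincide with the columns together with $0$, your maps $\phi_i=\pi_i\circ\psi$ are genuinely linear forms, nonvanishing because $r=k>0$, and pairwise distinct because two points lying in the same blocks would give $r=\lambda$, contradicting $r=k>\lambda$; the counting $2^n-1$ points versus $2^n-1$ nonzero forms then makes $x_i\mapsto\phi_i$ a bijection, and incidence becomes $\phi_i(u_j)=1$ with $u_j=\psi^{-1}(C_j)$, which (after identifying $\F_2^n$ with its dual, harmless since PG$(n-1,2)$ is self-dual) is exactly the complement of the point--hyperplane design. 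In effect your equality-case analysis re-derives the paper's Lemma~\ref{LemRank} together with the explicit Sylvester-type coordinatization the authors allude to in the introduction, so it buys an elementary proof of the paper's Corollary without invoking Hamada--Ohmori, at the price of not covering the general rank results their theorem sits inside. The one point you leave out is the ``when'' direction: you must also check that the complementary design of PG$(n-1,2)$ actually attains rank $n$, otherwise the equivalence is incomplete; this is immediate, since its columns are the vectors $(\phi(u))_{\phi\neq 0}$, which lie in the image of the linear evaluation map $u\mapsto(\phi(u))_{\phi}$ from $\F_2^n$, giving rank at most $n$, hence exactly $n$ by your lower bound. Add that sentence and the proof is complete.
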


Now, we can state the main result of this note, which is a direct consequence of Lemma \ref{LemRank} and Theorem \ref{ThmHamada}.

\begin{corollary}
Let $n$ denote the rank of $M$.
Then the complementary of PG$(n - 1, 2)$ is the only BIBD up to isomorphism such that $\mathscr{B}$ is a group.
\end{corollary}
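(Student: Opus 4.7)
The plan is to stitch Lemma \ref{LemRank} and Theorem \ref{ThmHamada} together into essentially a one-line argument, with only a bookkeeping check that the integer $n$ means the same thing in both statements.

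First I would handle the forward direction. Assume $\mathscr{B}$ is a group. By the definition given just before Lemma \ref{LemRank}, $n$ denotes $\rank_2(M)$. Lemma \ref{LemRank} then immediately tells us that $(X,\mathcal{B})$ is a $(2^n - 1, 2^{n-1}, 2^{n-2})$-SBIBD. This is precisely a design of the shape to which Theorem \ref{ThmHamada} applies, with the same integer $n$, and by hypothesis its incidence matrix $M$ realises the rank bound with equality. Hamada and Ohmori's theorem therefore forces $(X,\mathcal{B})$ to be isomorphic to the complementary of PG$(n - 1, 2)$.

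For the converse direction I would appeal to the well-known parameters of PG$(n-1, 2)$: its complementary design is a $(2^n - 1, 2^{n-1}, 2^{n-2})$-SBIBD, so Lemma \ref{LemRank} (used in the opposite implication) yields that its associated $\mathscr{B}$ is a group. Uniqueness up to isomorphism is already built into the conclusion of the forward direction, so nothing extra is needed here.

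The only nontrivial point worth flagging is that the two uses of $n$ are the same integer. In our setting $n$ is defined as $\rank_2(M)$; in Theorem \ref{ThmHamada} the same letter parameterises the SBIBD shape $(2^n - 1, 2^{n-1}, 2^{n-2})$. Lemma \ref{LemRank} aligns these two meanings precisely when $\mathscr{B}$ is a group, since it forces $v = 2^n - 1$, $k = 2^{n-1}$, $\lambda = 2^{n-2}$ with $n = \rank_2(M)$, which is why the equality case of Theorem \ref{ThmHamada} is the relevant one. I do not expect any real obstacle beyond this matching, since the substantive content has already been carried by Lemma \ref{LemRank} and by Hamada and Ohmori's theorem.
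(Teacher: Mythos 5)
Your proposal is correct and takes essentially the same route as the paper, which offers no proof beyond observing that the corollary is a direct consequence of Lemma~\ref{LemRank} and Theorem~\ref{ThmHamada}; your forward direction is exactly that stitching, with the right observation that $n=\rank_2(M)$ makes the equality case of Theorem~\ref{ThmHamada} apply. For the converse, just note that identifying the PG parameter with the rank $n$ (so that Lemma~\ref{LemRank} is applicable) again uses the ``when'' part of Theorem~\ref{ThmHamada}, namely that the complementary of PG$(n-1,2)$ attains $\rank_2 = n$.
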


%-----------------------------------------------------------------------------------------------------------------------
\section{Conclusion}

In this paper, we have considered combinatorial designs provided with a group law inherent to the blocks, namely the symmetric difference. 
We began by reviewing necessary conditions and found that these designs must have the parameters $(2^n - 1,2^{n-1},2^{n-2})$. 
Using a result due to Hamada and Ohmori, we proved that the complementary of PG$(n - 1, 2)$ is the only one with this property, up to an isomorphism.

\bibliographystyle{amsplain}
\bibliography{Corollary_of_Hamada_and_Ohmori}

\end{document}